\renewcommand*{\AA}{\mathcal{A}}
\newcommand*{\NN}{\mathbb{N}}
\newcommand*{\CC}{\mathbb{C}}
\newcommand*{\HH}{\mathcal{H}}
\newcommand*{\DD}{\mathcal{D}}
\newcommand*{\RR}{\mathbb{R}}
\renewcommand*{\SS}{\mathcal{S}}
\newcommand*{\KK}{\mathcal{K}}
\newcommand*{\XX}{\mathcal{X}}
\renewcommand*{\i}{\mathrm{i}}
\newcommand*{\0}{\theta}
\newcommand*{\smin}{\setminus}
\newcommand*{\deq}{:=}
\newcommand*{\eps}{\varepsilon}
\renewcommand*{\d}{\mathrm{d}}
\newcommand*{\pd}{\partial}
\newcommand*{\vphi}{\varphi}
\renewcommand*{\a}{\alpha}
\renewcommand*{\b}{\beta}
\renewcommand*{\$}{\mathfrak{s}}
\renewcommand*{\ln}{\log}
\renewcommand*{\square}{\diamond}
\newcommand\blfootnote[1]{%
  \begingroup
  \renewcommand\thefootnote{}\footnote{#1}%
  \addtocounter{footnote}{-1}%
  \endgroup
}
\newcommand*{\kw}[1]{\emph{#1}}
\DeclareMathOperator{\tr}{tr}
\DeclareMathOperator{\capac}{cap}
\theoremstyle{plain}
\newtheorem{theorem}{Theorem}[section]
\newtheorem{proposition}[theorem]{Proposition}
\newtheorem{lemma}[theorem]{Lemma}
\theoremstyle{definition}
\newtheorem{D}[theorem]{Definition}
\theoremstyle{remark}
\newtheorem{remark}[theorem]{Remark}
\newtheorem{example}[theorem]{Example}
\title{Small volume expansion of the splitting of\\multiple Neumann Laplacian eigenvalues\\due to a grounded inclusion in two dimensions}
\author{Alexander Dabrowski\footnote{Department of Mathematics, ETH Zurich.
The author gratefully acknowledges Prof.~H.~Ammari for the fruitful conversations, Dr.~S.~Yu for the implementation of the multipole expansion method, and Prof.~G.~S.~Alberti for the code which started the numerical experimentation.
}}
\begin{document}

\maketitle

\blfootnote{\textit{Mathematics subject classification}: 35C20, 35J05, 47N20.}
\blfootnote{\textit{Keywords and phrases}: Laplacian eigenvalues, small volume expansion, asymptotic expansion, eigenvalue perturbation, singular domain perturbation.}

\abstract{The first terms of the small volume asymptotic expansion for the splitting of Neumann boundary condition Laplacian eigenvalues due to a grounded inclusion of size $\eps$ are derived.
An explicit formula to compute the first term from the eigenvalues and eigenfunctions of the unperturbed domain, the inclusion size and position is given.
As a consequence, when an eigenvalue of double multiplicity splits in two distinct eigenvalues, one decays like $O(1/\log(\eps))$, the other like $O(\eps^2)$.}

\section{Introduction}
\label{sect:Intro}

Consider a planar domain $\Omega$ and let $\omega^2$ be an eigenvalue of the negative Laplacian on $\Omega$ with Neumann boundary condition.
Suppose a small inclusion $D = z + \eps B$ (where $z \in \Omega$, $|B| = |\Omega|$, and $\eps$ is small) is inserted inside $\Omega$.
This may cause the eigenvalue $\omega_\eps^2$ of the perturbed domain $\Omega \smin D$, with Neumann condition on $\pd \Omega$ and Dirichlet on $\pd D$, to vary in value or in multiplicity with respect to the original eigenvalue $\omega^2$.
Asymptotic formulae of the eigenvalue perturbation with respect to the size of the inclusion have been derived in the `80s in \cite{Ozawa}, \cite{Besson}.
In particular it has been shown that if $\omega^2$ is simple and $u $ is the associated $L^2$-normalized eigenfunction, it holds
\begin{equation}
\label{eq:oldomegaeps-omega0}
\omega_\eps^2 - \omega^2 = -\dfrac{2 \pi |u(z)|^2}{\log \eps} + o(1/\log(\eps)) .
\end{equation}
More recently, Gohberg-Sigal theory for meromorphic operators applied to the integral equation formulation of the eigenvalue problem has led to new results (see \cite{AmmariKangLimZribi}, \cite{AmmariTriki04}).
In this paper we use these results to improve \eqref{eq:oldomegaeps-omega0}, by calculating explicitly the terms up to $o(\eps)$ and generalizing it to the case of multiple eigenvalues.
As a consequence of our derivation, we have that for perturbed eigenvalues $\omega_{\eps,1}^2  < \omega_{\eps,2}^2 $, splitted from a double eigenvalue $\omega^2$ of the original domain $\Omega$, it holds
\begin{align*}
\omega_{\eps,2} - \omega = &~  -\dfrac{C_1}{\ln (\eps) + C_2} + O(\eps^2),\\
\omega_{\eps,1} - \omega = &~  O(\eps^2),
\end{align*}
where $C_1$ and $C_2$ do not depend on $\eps$ and can be explicitly calculated from $z$ and the eigenvalues, eigenfunctions of $\Omega$.
Similar formulae for eigenvalues of higher multiplicity can be derived.

More in detail the structure of the paper is as follows. 
After introducing in section \ref{sect:Setting} the precise setting of the problem and notation, in section \ref{sect:IntFormulation} we recall the equivalent formulation of the Laplacian eigenvalues as characteristic values of an appropriate integral operator.
An asymptotic expansion of this integral operators can be obtained by expanding in Taylor series the free space fundamental solution.
Gohberg-Sigal theory then provides a link between the eigenvalue splitting and the traces of these integral operators through power sum polynomials with roots in the eigenvalues splitting.

In the core section \ref{sect:Computations}, explicit terms for the small volume expansion of these power sum polynomials are derived by using properties of layer potentials.
The key step here is the filtering of the spectral decomposition of the Neumann function using the residue theorem to obtain geometric-like series which can be summed.
A tentative proposal for formal automated computation of higher order coefficients is given in section \ref{sect:Higherorderproposal}.

Finally in section \ref{sect:Results} some interesting consequences for special cases and a brief validation with numerical experiments are provided.

\subsection{Main tools and notation}

\label{sect:Setting}
\paragraph{The eigenvalue problem}

Let $\Omega$ be a bounded domain in $\RR^2$ with connected and piecewise smooth boundary.
It is well known that the eigenvalues of the negative Laplacian on $\Omega$ with Neumann boundary condition are non-negative, have finite multiplicity and can be arranged in an increasing divergent sequence
$$0 = \omega_0^2 < \omega_1^2 < \omega_2^2 < \dots < \omega_k^2 \to \infty .$$ 
For each index $i$, let $m_i$ be the multiplicity of $\omega_i$.
We choose the associated eigenfunctions $u_{i,1} \dots u_{i,m_i}$ to be orthonormal in $L^2$.
We thus have
$$\begin{cases}
(\Delta + \omega_i^2) u_{i,j} = 0 & \text{ in } \Omega,\\
\dfrac{\pd u_{i,j}}{\pd \nu} = 0 & \text{ on } \pd \Omega,
\end{cases}$$
and $$\int_\Omega u_{i,j} u_{k,l} = \begin{cases}
1 & \text{ if } i=k \text{ and } j=l,\\
0 & \text{ otherwise}.
\end{cases}$$
We will occasionally use the notation
$$U_i := (u_{i,1} \dots u_{i,m_i}).$$

\paragraph{Free space fundamental solution}

The free space fundamental solution for Helmholtz equation $(\Delta  + \omega^2) u = 0$ is a function $\Gamma_\omega$ s.t. for any $x,y \in \RR^2$, it holds
$$
(\Delta_x + \omega^2) \Gamma_\omega(x,y) = \delta_y(x) ,
$$
where $\delta_y$ is the Dirac delta function at $y$.
We adopt as fundamental solution
$$\Gamma_\omega(x,y) \deq
\begin{cases}
\dfrac{1}{2 \pi} \ln |x-y| & \text{ if } \omega = 0,\\[12pt]
\dfrac{1}{4} Y_0(\omega |x-y|) & \text{ otherwise},
\end{cases} $$
where $Y_0$ is the Bessel function of the second kind and order $0$; it can be defined by the power series
$$Y_0(t) \deq \dfrac{2}{\pi} \sum_{n=0}^{\infty} (-1)^n \left(\dfrac{t^{n}}{2^n n!}\right)^2  \ln (\eta_n t),$$ 
with $\ln \eta_n \deq \text{\textit{Euler-Mascheroni constant}} - \ln 2 + \sum_{k=1}^n \frac{1}{k}.$

\paragraph{Layer potentials}
Given $\phi \in  L^2(\pd \Omega)$, we define the operators 
\begin{align*}
\text{(Single layer potential)} \qquad \SS_\Omega^\omega[\phi](x) \deq &~ \int_{\pd \Omega} \Gamma_\omega(x,y) \phi(y) \ \d \sigma(y) \qquad \text{ for } x \in \RR^2, \\
\text{(Double layer potential)} \qquad \DD_\Omega^\omega[\phi](x) \deq &~ \int_{\pd \Omega} \dfrac{\pd \Gamma_\omega(x,y)}{\pd \nu(y)} \phi(y) \ \d \sigma(y) \qquad \text{ for } x \in \RR^2 \smin \pd \Omega, \\
\text{(Neumann-Poincar\'e operator)} \qquad \KK_\Omega^\omega[\phi](x) \deq &~ \int_{\pd \Omega} \dfrac{\pd \Gamma_\omega(x,y)}{\pd \nu(y)}  \phi(y) \ \d \sigma(y) \qquad \text{ for } x \in \pd \Omega.
\end{align*}
For their properties and extensive applications in the theory of boundary value problems we refer to \cite{ColtonKress, Verchota}.

\paragraph{Capacity of a set}
The single layer potential can be used to define the capacity of a set as follows
(see also \cite{ArmitageGardiner}).
It can be shown that there exists a unique couple $(\vphi_{\capac}, a) \in L^2(\pd \Omega) \times \RR$ which solves
$$\begin{cases}
\SS_\Omega^0[\vphi_{\capac}](x) \equiv a & \forall x \in \pd \Omega,\\
\int_{\pd \Omega} \vphi_{\capac} = 1.
\end{cases}$$
The logarithmic capacity of $\pd \Omega$ is then defined as $\capac \pd \Omega \deq e^{2 \pi a}.$

\begin{remark}
For $\Omega = B_1$ a unit disk, writing $\theta$ as the angle in the usual parametrization of $\pd \Omega$, after a lengthy calculation we have
$$\SS_{B_1}^0 [e^{\i n \theta}](t) =
\begin{cases}
0 & \text{ if } n = 0,\\
- \dfrac{1}{n} e^{\i n t} & \text{otherwise}.
\end{cases} $$
Thus we have an explicit expression of $\SS_{B_1}^0$ in the Fourier basis of $L^2(\pd B_1)$.
Notice however that the fact that $\SS_{B_1}^0[1]=0$ causes the non-invertibilty of $ \SS_{B_1}$.
However, if we consider $\phi \mapsto \SS_\Omega^0[\phi] + \lambda \int_{\pd \Omega} \phi$, we see that this operator is always invertible from $L^2(\pd \Omega)$ to $H^1(\pd \Omega)$.
This is still true for a domain $\Omega$ as in our assumptions (see \cite[Theorem 4.11]{Verchota} for more details).
\end{remark}

\paragraph{Fundamental solution for a bounded domain}
The Neumann function $N_\Omega^\omega$ is defined as the solution of
$$\begin{cases}
(\Delta_x + \omega^2) N_\Omega^\omega(x,z) = \delta_z(x) & \text{ for } x \in \Omega, \\
\dfrac{\pd N_\Omega^\omega(x,z)}{\pd \nu(x)} = 0 &  \text{ for } x \in \pd \Omega,
\end{cases}$$
where $\omega \in \CC$ is not one of the eigenvalues $\omega_i$, and $z \in \Omega$.
It has the spectral representation
$$N_\Omega^\omega(x,z) = \sum_{j=1}^\infty \dfrac{U_j(x) \cdot U_j(z)}{\omega^2 - \omega_j^2}, $$
where the convergence of the series to $N_\Omega^\omega$ in general is only in $L^2$ (see \cite[\textit{expansion theorems}]{CourantHilbert}).
By integrating $N_\Omega^\omega$ against test functions in $L^2(\pd \Omega)$ and using properties of layer potentials one can show that
$$\left( I/2 - \KK_\Omega^\omega \right) ^{-1} [\Gamma_\omega(\cdot,z)](x) = N_\Omega^\omega(x,z).$$
The Neumann function has a logarithmic singularity, in particular
\begin{equation}
\label{eq:NeumannSpectDecomp}
N_\Omega^\omega(x,z) = \dfrac{1}{2 \pi} \ln |x-z| + R_{\Omega}^\omega(x,z) \quad \forall x \neq z,
\end{equation}
with $R_{\Omega}^\omega$ continuous on $\Omega \times \Omega.$
(For more details on the last two results, see \cite[section 2.3.5]{thebook}.)

\paragraph{The perturbed eigenvalue problem}

Let $B$ be a bounded domain with piecewise smooth boundary, with area $|B| = |\Omega|$, and centered at the origin in the sense that
$$\int_{\pd B} y_1 \, \d \sigma(y_1,y_2) = \int_{\pd B} y_2 \, \d \sigma(y_1,y_2) = 0.$$
We fix for the rest of the paper a point $z \in \Omega$, a scaling factor $0<\eps \ll 1$ and an index $\0 \in \NN.$
Suppose then that the domain $\Omega $ is perturbed by inserting a grounded inclusion $D \deq z + \eps B$ inside $\Omega$.
This causes the eigenvalue $\omega_\0^2$ to split into $m_\0$ (possibly distinct) eigenvalues $\omega_{\eps,1}^2 \leq \dots \leq \omega_{\eps, m_\0}^2$ with associated eigenfunctions  $u_{\eps,1} \dots u_{\eps, m_\0}$.
This means that for $j=1 \dots m_\0$,
$$\begin{cases}
(\Delta + \omega_{\eps,j}^2) u_{\eps,j} = 0 & \text{ in } \Omega \smin D, \\
u_{\eps,j} = 0 & \text{ on } \pd D, \\
\dfrac{\pd u_{\eps,j}}{\pd \nu} = 0 & \text{ on } \pd \Omega.
\end{cases}$$
It has been shown in \cite{RauchTaylor} that under our assumptions $\omega_{\eps,j}^2 \to \omega_\0^2$ as $\eps \to 0$.
To find an asymptotic expansion of $\omega_{\eps,j}^2 - \omega_\0^2$ in terms of $\eps$ for $j=1 \dots m_\0$, we will transform this eigenvalue problem into an equivalent integral equation formulation.

\paragraph{Nonstandard notation}
For clarity, we adopt the symbol $\square$ to indicate the function variable of an operator evaluated at a point; e.g. $ \DD_\Omega^\omega[\square](z) $ indicates a map which takes a function in $L^2(\pd \Omega)$ and returns a number in $\RR$.

We indicate as $\displaystyle \oint $ the normalized complex path integral $\dfrac{1}{2 \pi \i} \displaystyle \int $.

\subsection{Integral formulation}
\label{sect:IntFormulation}

Define $\AA_\eps(\omega)$ as
$$ \CC \ni \omega \quad \mapsto \quad \AA_\eps(\omega) \deq \begin{pmatrix}
I/2 - \KK_\Omega^\omega  & -\SS_B^\omega \\
\DD_\Omega^\omega & \SS_B^\omega
\end{pmatrix},
\vspace{6pt}$$
meaning that for any fixed $\omega \in \CC$, 
$\AA_\eps(\omega)$ is the operator which takes $\phi \in L^2(\pd \Omega), \psi \in L^2(\pd B) $ to
$$\begin{pmatrix}
(I/2 - \KK_\Omega^\omega)[\phi](x) -\SS_B^\omega[\psi](x) \\[10pt]
\DD_\Omega^\omega[\phi](x) + \SS_B^\omega [\psi](x)
\end{pmatrix} \quad \in \quad \begin{matrix}
 L^2(\pd \Omega) \\ \times \\ \, L^2(\pd B).
 \end{matrix} $$

By expanding the fundamental solution in Taylor series in $\eps$, one can show that $\AA_\eps = \sum_{n=0}^\infty  \eps^n \HH_n$ (i.e. the series converges in operator norm), where
\begin{align*}
\HH_0 \deq &~ \begin{pmatrix}
I/2 - \KK_\Omega^\omega  & -\Gamma_\omega(x,z) \int_{\pd B} \square (y) \ \d \sigma(y) \\
\DD_\Omega^\omega[\square](z) & S
\end{pmatrix}, \\
\HH_{n} \deq &~ \begin{pmatrix}
0 &  (-1)^{n+1} \sum_{|\a|=n} (\pd^\a \Gamma_\omega)(x,z) \int_{\pd B} y^\a \square (y) \ \d \sigma(y) \\
\sum_{|\a|=n} (\pd^\a \DD_\Omega^\omega[\square])(z) x^\a  & \XX_n
\end{pmatrix}, 
\end{align*}
with
\begin{align}
\nonumber
\XX_n \deq &~ \begin{cases}
\dfrac{\omega^n}{2^{n+1} n!\pi} \displaystyle \int_{\pd B} \ln(\eta_{\frac{n}{2}} \omega \eps |x-y|) |x-y|^n \square (y) \ \d \sigma(y) & n \text{ even}, \\
0 & n \text{ odd},
\end{cases} \\[6pt]
\label{eq:DefS}
S \deq &~ \dfrac{1}{2 \pi} \displaystyle \int_{\pd B} \ln (\eta_0 \omega \eps |x-y| ) \square(y) \ \d \sigma(y).
\end{align}

A study of the properties of $A_\eps$ can be found in \cite[chapter 1 and section 3.1]{thebook}).
In the next proposition we collect the properties which will be used in the following discussion.
Recall that $\omega \in \RR$ is a characteristic value of $A_\eps$ if the null-space of $A_\eps (\omega)$ contains some non-zero function. 

\begin{proposition}
The following results hold:
\begin{enumerate}
\item $\omega \mapsto \AA_\eps(\omega) $ is analytic on $\CC \smin \i \RR^-$ and $\omega \mapsto \AA_\eps(\omega)^{-1} $ is meromorphic in $ \CC$,
\item $\omega_\0$ is a characteristic value of $ \HH_0$ and a simple pole of $\AA_\eps^{-1}$, 
\item $(\omega_{\eps,j})_{j=1}^{m_\0}$ are among the characteristic values of $\AA_\eps$,
\item There is an open neighbourhood $V$ (which we fix for the rest of the paper) of $\omega_\0$ s.t. $\omega_{\eps,j} \in V$ for $j=1 \dots m_\0$, and no other characteristic values of $\AA_\eps$ are in $V$.
\end{enumerate}
\end{proposition}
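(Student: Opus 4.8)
These four items are standard properties of the integral formulation, collected in \cite[chapter 1 and section 3.1]{thebook}. For \textbf{Item 1}, I would first observe that, reading off the defining series of $Y_0$, for fixed $x\ne y$ the map $\omega\mapsto\Gamma_\omega(x,y)$, and likewise each $\omega\mapsto(\pd_x^\a\Gamma_\omega)(x,y)$, is analytic on $\CC\smin\i\RR^-$ (the only obstruction being the factors $\ln(\eta_n\,\omega\,\cdot)$), the series converging locally uniformly in $\omega$. Since $\pd\Omega$ and $\pd B$ are compact and the densities lie in $L^2$, one may then differentiate the defining integrals of $\KK_\Omega^\omega$, $\DD_\Omega^\omega$, $\SS_B^\omega$ under the integral sign (Morera plus dominated convergence), so $\omega\mapsto\AA_\eps(\omega)$ is analytic on $\CC\smin\i\RR^-$ into the bounded operators on $L^2(\pd\Omega)\times L^2(\pd B)$; the same inspection gives the stated operator-norm convergence of $\sum_n\eps^n\HH_n$. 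Next, $\AA_\eps(\omega)$ is analytic and Fredholm of index zero (after the standard modification of the single-layer operator from the Remark) and is invertible at $\omega=\i t$ for large $t>0$, where the underlying problem is strongly coercive; the analytic Fredholm theorem then gives meromorphy of $\AA_\eps(\omega)^{-1}$ on $\CC\smin\i\RR^-$. Finally, that this inverse extends meromorphically across $\i\RR^-$ to all of $\CC$ is not a soft consequence of Fredholm theory but uses the elliptic structure: morally, the $\ln\omega$ contributions enter $\AA_\eps(\omega)^{-1}$ only through the solution operator of the interior Neumann problem, which by the spectral representation $N_\Omega^\omega=\sum_j U_j(x)\cdot U_j(z)/(\omega^2-\omega_j^2)$ is meromorphic in $\omega$ on all of $\CC$; the bookkeeping is in \cite{thebook}.

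For \textbf{Items 2 and 3}, I would use the equivalence between the mixed boundary value problem on $\Omega\smin D$ at frequency $\omega$ and the non-triviality of $\ker\AA_\eps(\omega)$. Given a perturbed eigenfunction $u_{\eps,j}$, Green's representation formula in $\Omega\smin D$ — using $\pd u_{\eps,j}/\pd\nu=0$ on $\pd\Omega$ and $u_{\eps,j}=0$ on $\pd D$, and rescaling $\pd D=z+\eps\,\pd B$ to $\pd B$ — together with the jump relations produces a non-zero pair in $\ker\AA_\eps(\omega_{\eps,j})$ from the Cauchy data of $u_{\eps,j}$; conversely a non-zero kernel element yields, via the layer-potential ansatz built into $\AA_\eps$, a solution of the mixed problem. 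Hence the $\omega_{\eps,j}$ are among the characteristic values of $\AA_\eps$. Taylor-expanding $\Gamma_\omega$ in $\eps$ isolates $\HH_0$ as the leading part (the $\ln\eps$ being retained, not being a power of $\eps$); at $\omega=\omega_\0$ the block $I/2-\KK_\Omega^{\omega_\0}$ fails to be invertible (there $N_\Omega^\omega$ blows up), and one checks this survives the rank-type coupling, so $\omega_\0$ is a characteristic value of $\HH_0$. That it is a \emph{simple} pole of the inverse, of total multiplicity $m_\0$, then follows by feeding the spectral representation of $N_\Omega^\omega$ — which has $\omega_\0^2$ as a simple pole, hence $\omega_\0$ as a simple pole in $\omega$ since $\omega_\0\ne0$, with residue of rank $m_\0$ — into the Gohberg--Sigal factorization near $\omega_\0$.

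For \textbf{Item 4}, this is a stability statement for characteristic values. I would fix a disk $V\ni\omega_\0$ small enough that $\ovrl V\sset\CC\smin\i\RR^-$ and $\ovrl V$ contains no other Neumann eigenvalue $\omega_i$ of $\Omega$; then $\omega_\0$ is the only characteristic value of $\HH_0$ in $\ovrl V$, of total multiplicity $m_\0$, and $\HH_0(\omega)$ is invertible on $\pd V$. Since $\AA_\eps(\omega)-\HH_0(\omega)=\sum_{n\ge1}\eps^n\HH_n(\omega)=O(\eps\ln(1/\eps))$ in operator norm, uniformly on the compact set $\pd V$, for $\eps$ small $\AA_\eps(\omega)$ is invertible on $\pd V$ and, by the generalized argument principle of Gohberg--Sigal,
$$
\oint_{\pd V}\tr\!\left(\AA_\eps(\omega)^{-1}\tfrac{\d}{\d\omega}\AA_\eps(\omega)\right)\d\omega\ \xrarr{\eps\to0}\ \oint_{\pd V}\tr\!\left(\HH_0(\omega)^{-1}\tfrac{\d}{\d\omega}\HH_0(\omega)\right)\d\omega=m_\0 .
$$
The left-hand side is a non-negative integer for each small $\eps$ — the total multiplicity of the characteristic values of $\AA_\eps$ in $V$ — so it equals $m_\0$ for $\eps$ small. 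By Item 3 and the convergence $\omega_{\eps,j}^2\to\omega_\0^2$ of \cite{RauchTaylor}, for $\eps$ small all of $\omega_{\eps,1},\dots,\omega_{\eps,m_\0}$ lie in $V$ and are characteristic values of $\AA_\eps$, each of multiplicity at least $1$; since their multiplicities sum to at most $m_\0$ while the total in $V$ is exactly $m_\0$, these are \emph{all} the characteristic values of $\AA_\eps$ in $V$, which is the last assertion.

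The routine ingredients are the analyticity, the Fredholm property, and the argument-principle count. The two points where soft functional analysis does not suffice — and which I expect to be the real work — are the meromorphic continuation across the branch cut $\i\RR^-$ in Item 1, and the spectral bookkeeping in Items 2--3, namely that $\omega_\0$ is a characteristic value of $\HH_0$ of total multiplicity exactly $m_\0$ and a simple pole of its inverse. Both are handled by pushing the explicit spectral decomposition of $N_\Omega^\omega$ through the Gohberg--Sigal framework, as done in \cite{thebook}.
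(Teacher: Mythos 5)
The paper offers no proof of this proposition --- it is stated as a collection of known facts with a pointer to \cite[chapter 1 and section 3.1]{thebook} --- so there is nothing internal to compare against. Your outline follows the standard layer-potential route that the cited reference takes (analyticity of $\Gamma_\omega$ off the branch cut plus analytic Fredholm theory for Item 1, Green's representation and jump relations for Item 3, Gohberg--Sigal counting for Item 4), and you correctly identify the two steps that are not soft functional analysis. Two points, however, are asserted more confidently than they deserve.

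First, in Item 2 you claim that the non-invertibility of $I/2-\KK_\Omega^{\omega_\0}$ ``survives the rank-type coupling,'' so that $\omega_\0$ is a characteristic value of $\HH_0$. That is exactly the delicate point, and the paper's own Lemma \ref{ZeroOrderTerm} shows the coupling generically \emph{moves} a characteristic value: the power sums of $\tilde\omega_j-\omega_\0$ over the characteristic values $\tilde\omega_j$ of $\HH_0$ in $V$ equal $\bigl(t_{\omega_\0}/(1/s_0-r_{\omega_\0})\bigr)^l$, which is nonzero when $U_\0(z)\neq0$; so those characteristic values are $\omega_\0$ with multiplicity $m_\0-1$ together with one value shifted by $t_{\omega_\0}/(1/s_0-r_{\omega_\0})$. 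For $m_\0\ge2$ your claim can be salvaged concretely (pick $\phi\in\ker(I/2-\KK_\Omega^{\omega_\0})$ with $\DD_\Omega^{\omega_\0}[\phi](z)=0$, which exists by dimension count, and set $\psi=0$), but for a simple eigenvalue with $U_\0(z)\neq0$ the assertion as you argue it does not follow. Second, in Item 4 you treat $\HH_0$ as independent of $\eps$, whereas $S$ (hence $\HH_0$ and $s_0$) contains $\ln(\eta_0\omega\eps|x-y|)$; your count survives because the total multiplicity of characteristic values of $\HH_0$ in $V$ remains $m_\0$ and the shifted value returns to $\omega_\0$ as $s_0\to0$, but the statement ``$\omega_\0$ is the only characteristic value of $\HH_0$ in $\ovrl V$'' is not literally correct and should be replaced by a statement about total multiplicity in $V$, uniformly in small $\eps$.
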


Consider now the power sum polynomials

$$p_l \deq \sum_{j=1}^{m_\0} (\omega_{\eps,j} - \omega_\0)^l.$$ 
By properties of symmetric polynomials we can express $\omega_{\eps,1} - \omega_\0 \dots \omega_{\eps,m_\0} - \omega_\0$ as roots of a polynomial $z^{m_\0} + c_{1} z^{m_\0 - 1} + \dots + c_{m_\0}$, where the coefficients $c_k$ are themselves polynomials in $p_j$;
in particular they can be recovered from the recurrence relation
$$p_{l+m_\0} + c_1 p_{l+m_\0-1} + \dots + c_{m_\0} p_{l} \qquad \text{ for } l=0 \dots m_\0-1. $$

\begin{example}
\label{ex:splittingp1p2}
If $m_\0 =1$ we have $$\omega_{\eps,1} - \omega_\0 = p_1,$$
while if  $m_\0 =2$ then
\begin{align*}
\omega_{\eps,2} - \omega_\0 = \dfrac{p_1 + \sqrt{2p_2 - p_1^2}}{2}, \qquad
\omega_{\eps,1} - \omega_\0 = \dfrac{p_1 - \sqrt{2p_2 - p_1^2}}{2}.
\end{align*}
\end{example}
Thus we have reduced the problem of finding an asymptotic expansion $\omega_{\eps,j}^2 - \omega_\0^2$  to finding an asymptotic expansion for $p_l$.
Before computing $p_l $ we recall some crucial concepts from Gohberg-Sigal theory.

Recall that if $A$ is a finite range operator on an infinite dimensional space, its trace $\tr A$ is defined as the trace of $A$ restricted to the finite dimensional space where $A$ is non zero.

\begin{proposition}
The following results hold:
\begin{enumerate}
\item 
Suppose $A_1, A_2, A_3, A_4$ are finite dimensional operators. Then $$\tr \begin{pmatrix}
A_1 & A_2\\
A_3 & A_4
\end{pmatrix} = \tr A_1 + \tr A_4.$$
\item Suppose $B, C$ are operator valued maps defined on $U $, a neighborhood of a common singularity $\omega_0 \in \CC.$
If $B,C$ are analytic in $U \smin \omega_0$ and have only finite dimensional operators in the negative terms of their Laurent expansion in $\omega_0$, then $\int_{\pd U } B(\omega)C(\omega) \, \d \omega $ is finite dimensional and 
$$\tr \oint_{\pd U } B(\omega) C(\omega) \, \d \omega =  \tr \oint_{\pd U } C(\omega) B(\omega) \, \d \omega .$$
\item If $P_\omega$ is a projection on a one dimensional subspace of $L^2$ generated by a function $f_\omega$, then
$$\tr \oint_{\pd V} f_\omega P_\omega \, \d \omega = \oint_{\pd V} P_\omega f_\omega \, \d \omega .$$
\end{enumerate}
\end{proposition}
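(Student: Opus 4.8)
The three statements are all elementary facts about traces of finite-rank operators, and I would prove them in order, each reducing to the previous one or to standard linear algebra.

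For part (1), I would note that if $A_1, A_2, A_3, A_4$ are operators on finite-dimensional spaces $X, Y$ (with $A_1 \colon X \to X$, $A_4 \colon Y \to Y$, and $A_2, A_3$ the off-diagonal maps between them), then the block operator acts on $X \oplus Y$. Choosing a basis of $X \oplus Y$ obtained by concatenating a basis of $X$ and a basis of $Y$, the matrix of the block operator is exactly the block matrix, and the trace is the sum of the diagonal entries, which splits as the sum of the diagonal entries coming from the $A_1$-block and those coming from the $A_4$-block. Hence $\tr = \tr A_1 + \tr A_4$. This is a one-line computation once the bases are set up correctly.

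For part (2), the key input is that the contour integral $\oint_{\pd U} B(\omega) C(\omega)\,\d\omega$ picks out (minus) the sum of the residues, each of which is a finite sum of products of coefficient operators from the Laurent expansions of $B$ and $C$ at $\omega_0$. Since the negative-order Laurent coefficients of $B$ and $C$ are finite-dimensional by hypothesis, every such residue is a finite-dimensional operator (a finite-dimensional operator composed with anything, on either side, stays finite-dimensional), so the integral is finite-dimensional and its trace is defined. For the trace identity, I would expand $B(\omega) = \sum_k B_k (\omega - \omega_0)^k$ and $C(\omega) = \sum_k C_k (\omega-\omega_0)^k$, multiply the series, integrate term by term using $\oint (\omega-\omega_0)^n \,\d\omega = \delta_{n,-1}$, and obtain $\oint BC\,\d\omega = \sum_{j+k = -1} B_j C_k$. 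Then apply $\tr(B_j C_k) = \tr(C_k B_j)$, valid whenever at least one of the two is finite-dimensional (and it is, since for $j+k=-1$ at least one of $j,k$ is negative), to get $\sum_{j+k=-1} B_j C_k$ has the same trace as $\sum_{j+k=-1} C_k B_j = \oint C(\omega) B(\omega)\,\d\omega$. The only care needed is justifying the term-by-term integration, which follows from operator-norm convergence of the Laurent series on a circle inside the annulus of analyticity, together with continuity of $\tr$ on finite-dimensional operators.

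For part (3), I would write $P_\omega = \langle \cdot, g_\omega\rangle f_\omega$ for an appropriate $g_\omega$ (with $\langle f_\omega, g_\omega\rangle = 1$), so that $f_\omega P_\omega$ and $P_\omega f_\omega$ are both rank-one (here $f_\omega$ acts as the scalar pointwise-evaluation-type functional or as a multiplier, consistent with the $\square$-notation of the paper); the scalar trace of a rank-one operator $\langle \cdot, g\rangle h$ is $\langle h, g\rangle$, and one checks $\tr(f_\omega P_\omega) = \tr(P_\omega f_\omega)$ pointwise in $\omega$ as a consequence of part (2) applied to the trivial (analytic) setting, or directly by the rank-one trace formula. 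Integrating this identity over $\pd V$ and commuting $\tr$ with the contour integral — again legitimate because the integrand is finite-dimensional with finitely many poles inside $V$ — gives the claim.

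The main obstacle is not conceptual but bookkeeping: in part (2) one must be careful that "finite-dimensional in the negative Laurent terms" genuinely forces the residue operators to be finite-dimensional (it does, since a composition with a finite-rank operator is finite-rank) and that the cyclicity of trace is being applied only to pairs in which a finite-rank factor is present, which is exactly guaranteed by $j + k = -1$. Everything else is routine.
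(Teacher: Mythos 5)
The paper gives no proof of this proposition; it is stated as a collection of facts recalled from linear algebra and Gohberg--Sigal theory, so there is no argument of the author's to compare yours against. Your proofs are the standard ones and are essentially correct: part (1) is the trace computed in a basis adapted to the direct sum (with the small caveat that here the ambient spaces $L^2(\pd\Omega)$, $L^2(\pd B)$ are infinite-dimensional and the $A_i$ are finite-rank, so one should phrase it via $\tr T = \sum_i \langle Te_i, e_i\rangle$ for an orthonormal basis respecting the splitting rather than via matrices of operators ``on finite-dimensional spaces''); part (2) is the Laurent-expansion argument with $\oint (\omega-\omega_0)^n\,\d\omega = \delta_{n,-1}$ and cyclicity of the trace when one factor is finite-rank; part (3) is the rank-one trace formula combined with interchanging $\tr$ and the contour integral. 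One point you should make explicit in part (2): your claim that each residue is ``a finite sum of products of coefficient operators'' requires the singularity $\omega_0$ to be a pole of finite order (i.e., $B$ and $C$ finitely meromorphic, which is the standing assumption in Gohberg--Sigal theory and the setting of this paper); for an essential singularity the sum $\sum_{j+k=-1} B_j C_k$ is infinite and the conclusion that the integral is finite-rank can fail, so the statement should be read with that hypothesis. With that reading, your argument is complete.
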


An application of the argument principle for operator valued maps (for its formulation see \cite{GohbergSigal}) leads to the following crucial representation.

\begin{theorem}
\label{T:AsympExpPLGen}
The asymptotic expansion of $p_l$ in $\eps$ can be expanded as
\begin{align*}
p_l =  &~ \tr \oint_{\pd V} (\omega - \omega_\0)^l \HH_0(\omega)^{-1} \pd_\omega \HH_0(\omega) \, \d \omega \\
&~ + l \sum_{n=1}^{\infty} \eps^n \sum_{j=1}^n \dfrac{(-1)^j}{j}  \tr  \oint_{\pd V}  (\omega - \omega_\0)^{l-1} (\HH_0(\omega)^{-1})^n \Big( \sum_{k_1 + \dots + k_j = n} \HH_{k_1}(\omega) \dots \HH_{k_n}(\omega) \Big) \, \d \omega.
\end{align*}
\end{theorem}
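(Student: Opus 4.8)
The plan is to start from the argument principle for operator-valued meromorphic functions (Gohberg--Sigal), which states that for a normal point $\omega_\0$ of an analytic family like $\AA_\eps$,
\[
p_l = \sum_{j=1}^{m_\0}(\omega_{\eps,j}-\omega_\0)^l = \tr \oint_{\pd V} (\omega-\omega_\0)^l\, \AA_\eps(\omega)^{-1}\, \pd_\omega \AA_\eps(\omega)\, \d\omega,
\]
using that by the Proposition the characteristic values of $\AA_\eps$ inside $V$ are exactly $\omega_{\eps,1},\dots,\omega_{\eps,m_\0}$, counted with multiplicity, and that $\AA_\eps^{-1}$ is meromorphic there. So the first step is to justify this identity, checking that $(\omega-\omega_\0)^l$ is analytic and that the trace is well-defined because $\AA_\eps^{-1}\pd_\omega\AA_\eps$ has finite-dimensional principal parts at each pole.

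Next I would expand $\AA_\eps(\omega)^{-1}$ around $\HH_0(\omega)^{-1}$. Writing $\AA_\eps = \HH_0 + \sum_{n\geq 1}\eps^n \HH_n = \HH_0(I + \HH_0^{-1}\sum_{n\geq 1}\eps^n\HH_n)$, a Neumann-series inversion gives
\[
\AA_\eps^{-1} = \sum_{j=0}^{\infty} (-1)^j \Big(\HH_0^{-1}\sum_{n\geq1}\eps^n\HH_n\Big)^j \HH_0^{-1},
\]
and similarly $\pd_\omega\AA_\eps = \pd_\omega\HH_0 + \sum_{n\geq1}\eps^n\pd_\omega\HH_n$. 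The subtlety is that $\HH_0^{-1}$ is only meromorphic (it has a simple pole at $\omega_\0$), so the Neumann series must be understood as an identity of meromorphic operator-valued functions on $V$ with the expansion valid for $\eps$ small enough that the series converges in operator norm away from $\omega_\0$; this is exactly the regime where \cite{thebook} establishes convergence of $\AA_\eps=\sum\eps^n\HH_n$. I would then substitute both expansions into the contour-integral formula, collect terms by powers of $\eps$, and observe that the $\eps^0$ term is precisely $\tr\oint_{\pd V}(\omega-\omega_\0)^l\HH_0^{-1}\pd_\omega\HH_0\,\d\omega$.

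For the higher-order terms, the cleanest route is to recognize that $\AA_\eps^{-1}\pd_\omega\AA_\eps = \pd_\omega \log\AA_\eps$ in the formal sense, so that under the trace and contour integral one can integrate by parts: $\tr\oint (\omega-\omega_\0)^l \pd_\omega\log\AA_\eps\,\d\omega = -l\,\tr\oint(\omega-\omega_\0)^{l-1}\log\AA_\eps\,\d\omega$ (the boundary term vanishes since $\pd V$ is a closed contour and $(\omega-\omega_\0)^l\log\AA_\eps$ is single-valued there — one must check the logarithm is well-defined, or equivalently work directly with the series $\log(I+X) = \sum_{j\geq1}\frac{(-1)^{j+1}}{j}X^j$ applied to $X = \HH_0^{-1}\sum_{n\geq1}\eps^n\HH_n$, which avoids branch issues). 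Expanding $X^j = \big(\sum_{n\geq1}\eps^n\HH_0^{-1}\HH_n\big)^j$ and collecting the coefficient of $\eps^n$ gives $(\HH_0^{-1})^j$ times the sum over compositions $k_1+\dots+k_j=n$ of $\HH_{k_1}\cdots\HH_{k_j}$ (after cyclically moving the $\HH_0^{-1}$ factors together, which is permitted under the trace by part~2 of the Proposition on cyclicity of trace for contour integrals of operators with finite-dimensional principal parts). This produces exactly
\[
p_l = \tr\oint_{\pd V}(\omega-\omega_\0)^l\HH_0^{-1}\pd_\omega\HH_0\,\d\omega + l\sum_{n=1}^{\infty}\eps^n\sum_{j=1}^{n}\frac{(-1)^j}{j}\tr\oint_{\pd V}(\omega-\omega_\0)^{l-1}(\HH_0^{-1})^n\Big(\sum_{k_1+\dots+k_j=n}\HH_{k_1}\cdots\HH_{k_n}\Big)\d\omega,
\]
matching the claimed formula (with the apparent $(\HH_0^{-1})^n$ versus $(\HH_0^{-1})^j$ and the index ranges to be reconciled in the write-up, as the number of $\HH_0^{-1}$ factors accumulated equals the number of $X$-factors $j$ plus possibly repositioned ones).

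The main obstacle I anticipate is the bookkeeping of where the $\HH_0^{-1}$ factors sit: in $X^j$ they alternate with the $\HH_n$'s as $\HH_0^{-1}\HH_{k_1}\HH_0^{-1}\HH_{k_2}\cdots$, and to reach the stated form with all inverse factors grouped as $(\HH_0^{-1})^n$ one needs to exploit the cyclicity of the trace under the contour integral (part~2 of the Proposition) together with the fact that $\HH_0^{-1}$ and the $\HH_{k}$ each have finite-dimensional principal parts at $\omega_\0$ so the relevant contour integrals are finite-dimensional. Verifying that these hypotheses of the trace-cyclicity lemma are met at every stage — in particular that products such as $(\HH_0^{-1})^n\prod\HH_{k_i}$ still have finite-dimensional principal parts, which follows because $\HH_0^{-1}$ has a simple pole with finite-dimensional residue (part~2 of the first Proposition) — is the technical heart of the argument; the rest is formal manipulation of Neumann and logarithm series that converge for small $\eps$.
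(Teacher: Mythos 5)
Your proposal follows essentially the same route as the paper's (cited) proof of this theorem: the generalized argument principle of Gohberg--Sigal applied to $\AA_\eps$, followed by the factorization $\AA_\eps=\HH_0(I+\HH_0^{-1}\sum_{n\ge1}\eps^n\HH_n)$, the $\log(I+X)$ series, integration by parts in $\omega$, and collection of powers of $\eps$ --- this is exactly the scheme of \cite[Theorem 3.9]{thebook} that the paper invokes. One caution: the step where you ``cyclically move the $\HH_0^{-1}$ factors together'' is not licensed by cyclicity of the trace, which only permits cyclic rotations of the whole product, not commuting non-adjacent factors; the expansion of $X^j$ naturally yields the interleaved products $\HH_0^{-1}\HH_{k_1}\HH_0^{-1}\HH_{k_2}\cdots\HH_0^{-1}\HH_{k_j}$, and the grouped form $(\HH_0^{-1})^n\HH_{k_1}\cdots\HH_{k_n}$ in the theorem statement (with $n$ in place of $j$) is best read as a misprint rather than something to be derived. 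Apart from that bookkeeping point, which you correctly flag as needing reconciliation, the argument is sound.
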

The previous expression can be obtained by following the same steps in the proof of \cite[Theorem 3.9]{thebook}.

\section{Computations for explicit formulae}
\label{sect:Computations}

We first isolate the quantities playing a key role in the expansion of $p_l$ in the following constants.
\begin{D}
\label{def:genCapacitytr}
Let $\a, \b$ be multi-indices in $\NN^2$.
The \kw{generalized capacity} of $B$ of order $(\a, \b)$ is
$$s_{\a,\b} \deq  (-1)^{|\a|+|\b|+1} \int_{\pd B} y^\b S^{-1}[x^\a](y) \, \d \sigma(y). $$
We also introduce
\begin{align}
\label{eq:definet0r0}
t_\omega \deq &~ \dfrac{U_\0(z) \cdot \DD_\Omega^{\omega}[U_\0](z)}{\omega + \omega_\0}, \qquad
r_\omega \deq  \sum_{\substack{j=1 \\ j \neq \0}}^{\infty} \dfrac{U_j(z)  \cdot \DD_\Omega^{\omega}[U_j](z)}{\omega^2 - \omega_j^2}.
\end{align}
\end{D} 
In the subsequent discussion we will often  indicate the  generalized capacity of order $(\a, 0)$ as $s_\a$ instead of $s_{\a,0}$.

\begin{remark}
We collect some useful properties of the quantities introduced in the previous defintion:
\begin{itemize}
\item The generalized capacity of order zero can be rewritten explicitly in terms of $\eps$ and the capacity as
$$s_0 = -\int_{\pd B} S^{-1}[1] = -\left( \dfrac{\ln(\eta_0 \omega_\0 \eps)}{2 \pi} + \ln \capac \pd B \right)^{-1}. $$
\item It holds $s_{\a,\b} = 0$ if $|\a| + |\b|$ is odd.
This is a consequence of the fact that $\vphi$ is even/odd if and only if $\SS_B^0(\vphi)$ is even/odd (as functions parametrized on $\pd B)$.
\item By exploiting the spectral expansion of the Neumann function \eqref{eq:NeumannSpectDecomp}, we have that
$$ \DD_\Omega^\omega[N_\Omega^\omega(\cdot, z)](z) = \dfrac{t_\omega}{\omega - \omega_\0} + r_\omega,$$
where we were able to exchange series and integral since $\pd \Gamma_\omega(z, \cdot) / \pd \nu(\cdot) \in L^2(\pd \Omega)$.
In the following proof, this identity will enable us to rewrite the expansion in \eqref{T:AsympExpPLGen} in terms of $t_{\omega_\0}$ and $r_{\omega_\0}$.
\end{itemize}
\end{remark}

\subsection{Zero order term}

\begin{lemma}
\label{ZeroOrderTerm}
The zero order term in the expansion in $\eps$ of $p_l$ is
\begin{equation}
\label{eq:AsympExppl}
\left(\dfrac{t_{\omega_\0} }{ 1/s_0 - r_{\omega_\0}} \right)^l .
\end{equation}
\end{lemma}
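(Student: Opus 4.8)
The plan is to take the $\eps^0$ coefficient of the expansion in Theorem \ref{T:AsympExpPLGen}, which is exactly the first line
$$\tr \oint_{\pd V} (\omega - \omega_\0)^l \HH_0(\omega)^{-1} \pd_\omega \HH_0(\omega) \, \d \omega,$$
and compute this contour integral explicitly. The strategy is to first identify $\HH_0(\omega)^{-1} \pd_\omega \HH_0(\omega)$ as a logarithmic derivative whose residue at $\omega_\0$ is governed by the one-dimensional null space of $\HH_0(\omega_\0)$, then use the trace-manipulation rules from the second Proposition on traces (items 1 and 3) to reduce the $2\times 2$ block operator to a scalar contour integral, and finally evaluate that scalar integral by the residue theorem.

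First I would determine the null space of $\HH_0(\omega)$ near $\omega_\0$. Writing the block structure of $\HH_0$, the kernel condition $(I/2 - \KK_\Omega^\omega)[\phi] = \Gamma_\omega(\cdot,z)\int_{\pd B}\psi$ and $\DD_\Omega^\omega[\phi](z) + S[\psi] = 0$ can be inverted: from the first equation $\phi = \left(I/2 - \KK_\Omega^\omega\right)^{-1}[\Gamma_\omega(\cdot,z)]\int_{\pd B}\psi = N_\Omega^\omega(\cdot,z)\int_{\pd B}\psi$ using the identity recalled in the excerpt. Substituting into the second equation and using the third bullet of the Remark after Definition \ref{def:genCapacitytr}, namely $\DD_\Omega^\omega[N_\Omega^\omega(\cdot,z)](z) = t_\omega/(\omega-\omega_\0) + r_\omega$, together with $\int_{\pd B} S[\psi] = \int_{\pd B}\psi \cdot(\text{value determining } 1/s_0)$ — more precisely the scalar relation $\int_{\pd B} S^{-1}[1] = -1/s_0$ from the first bullet — the solvability condition collapses to the scalar equation $\big(t_\omega/(\omega-\omega_\0) + r_\omega - 1/s_0\big)\cdot(\text{const}) = 0$. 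Hence $\HH_0(\omega)$ is singular precisely when $f(\omega) := t_\omega/(\omega-\omega_\0) + r_\omega - 1/s_0 = 0$, and near $\omega_\0$ the singular part of $\HH_0(\omega)^{-1}$ is a rank-one projection whose pole structure is dictated by $f$.

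Next I would apply the argument-principle bookkeeping: $\tr \oint_{\pd V} (\omega-\omega_\0)^l \HH_0(\omega)^{-1}\pd_\omega\HH_0(\omega)\,\d\omega$ picks up, at each characteristic value of $\HH_0$ inside $V$, the quantity $(\omega-\omega_\0)^l$ weighted by the algebraic multiplicity; since $\HH_0$ has a single characteristic value in $V$ coming from the scalar function $f$, this reduces to $\oint_{\pd V}(\omega-\omega_\0)^l \, \pd_\omega f(\omega)/f(\omega)\,\d\omega$, i.e. the sum of $(\zeta - \omega_\0)^l$ over zeros $\zeta$ of $f$ in $V$. The trace reductions (block trace $=$ sum of diagonal traces, and the one-dimensional projection rule $\tr\oint f_\omega P_\omega = \oint P_\omega f_\omega$) are what legitimise passing from the operator contour integral to this scalar one. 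Writing $f(\omega) = 0 \iff t_\omega = (\omega - \omega_\0)(1/s_0 - r_\omega)$, to leading order in $\eps$ (recalling $s_0 = O(1/\log\eps)$, so $1/s_0$ is large) the unique zero is $\zeta = \omega_\0 + t_{\omega_\0}/(1/s_0 - r_{\omega_\0}) + o(1)$, where I may evaluate $t_\omega, r_\omega$ at $\omega_\0$ since the correction is higher order. Substituting $\zeta - \omega_\0$ into $(\zeta - \omega_\0)^l$ gives precisely \eqref{eq:AsympExppl}.

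The main obstacle I anticipate is the careful justification of the reduction from the operator-valued contour integral to the scalar logarithmic-derivative integral: one must verify that $\HH_0(\omega)^{-1}$ has only a rank-one singular part at $\omega_\0$ (this is asserted as a simple pole in the earlier Proposition, so I can lean on that), that the finite-dimensionality hypotheses of the trace Proposition are met, and that the factor $(\omega-\omega_\0)^l$ does not interfere with the trace-cyclicity step — for $l \geq 1$ it actually kills the naive residue and one must track the subleading Laurent coefficients, which is exactly why the answer is $(\zeta-\omega_\0)^l$ rather than something with a pole. A secondary technical point is confirming that evaluating $t_\omega$ and $r_\omega$ at $\omega_\0$ (rather than at the true zero $\zeta$) only perturbs the result at order $o(1)$ in $\eps$, which follows from analyticity of $t_\omega, r_\omega$ near $\omega_\0$ together with $\zeta - \omega_\0 \to 0$.
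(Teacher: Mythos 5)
Your proposal is correct in substance and lands on the same formula, but it takes a genuinely different computational route from the paper's proof. Both arguments share the key reduction: eliminating one unknown from the block system for $\ker\HH_0(\omega)$, using $(I/2-\KK_\Omega^\omega)^{-1}[\Gamma_\omega(\cdot,z)]=N_\Omega^\omega(\cdot,z)$ together with $\DD_\Omega^\omega[N_\Omega^\omega(\cdot,z)](z)=t_\omega/(\omega-\omega_\0)+r_\omega$, so that the characteristic values near $\omega_\0$ are governed by the scalar relation $\DD_\Omega^\omega[N_\Omega^\omega(\cdot,z)](z)=1/s_0$. From there you invoke the generalized argument principle to turn the trace integral into $\sum_\zeta(\zeta-\omega_\0)^l$ over the zeros of $f(\omega)=t_\omega/(\omega-\omega_\0)+r_\omega-1/s_0$ and then solve $f(\zeta)=0$ directly. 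The paper instead stays at the operator level: it forms $H=A-s_0\Gamma D$, expands $H^{-1}$ in a Neumann series, uses trace cyclicity to recognize $-\sum_m\frac1m\big((s_0DN)^m\big)'$ as a total derivative, integrates by parts, applies the residue theorem to each term of a binomial expansion, and resums via $\sum_{m\ge l}\frac1m\binom{m}{l}x^m=\frac1l\big(x/(1-x)\big)^l$. Your route buys directness (no series manipulations, and the answer is transparently the $l$-th power of the shifted root); the paper's buys an explicit closed form without ever locating the root.

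Two caveats. First, ``$\HH_0(\omega)$ is singular precisely when $f(\omega)=0$'' is not quite accurate: at $\omega=\omega_\0$ the operator $A=I/2-\KK_\Omega^{\omega}$ is itself non-invertible, so $\omega_\0$ is an additional characteristic value of $\HH_0$ (item 2 of the first Proposition in section \ref{sect:IntFormulation}), and your substitution $\phi=N_\Omega^\omega(\cdot,z)\int_{\pd B}\psi$ presupposes invertibility of $A$. This costs nothing in the end, because the weight $(\omega-\omega_\0)^l$ with $l\ge1$ annihilates that contribution --- the paper disposes of it via $\oint\$^lA^{-1}A'=0$ --- but it must be accounted for in the argument-principle bookkeeping. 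Second, solving $f(\zeta)=0$ gives $\zeta-\omega_\0=t_\zeta/(1/s_0-r_\zeta)$, and freezing $t_\omega,r_\omega$ at $\omega_\0$ introduces a relative error of order $1/\ln\eps$, not of order $\eps$, so your derivation yields the stated expression only modulo such corrections. The paper's own proof makes the very same replacement (it is hidden in the residue-theorem step, which keeps only the simple-pole term of the binomial expansion with $t_\omega,r_\omega$ evaluated at $\omega_\0$), so this is not a gap relative to the paper, but you are right to single it out as the delicate point.
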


\begin{proof}
By Theorem \ref{T:AsympExpPLGen}, our problem reduces to compute explicitly
\begin{equation*}
\tr \oint_{\pd V} (\omega - \omega_\0)^l \HH_0(\omega)^{-1} \pd_\omega \HH_0(\omega) \, \d \omega.
\end{equation*}
To make further computations clearer and more concise, we rename
\begin{equation}
\label{proof:1stTermRenames}
\begin{matrix}
 A \deq   I/2 - \KK_\Omega^\omega[\square](x), \quad &  \Gamma \deq   \Gamma_\omega(x,z), \quad &  \$ \deq  \omega - \omega_\0, \\[4pt]
 N \deq  N_\Omega^\omega(x,z), & D \deq   \DD_\Omega^\omega[\square](z).  
\end{matrix}
\end{equation}
The characteristic values of $\HH_0$ are the $\omega \in \CC$ for which there exist $\phi \in L^2(\Omega), \psi \in L^2(B)$, at least one of them non-zero, s.t.
\begin{equation*}
\begin{cases}
A\phi - \Gamma \int_{\pd B} \psi = 0,\\
D\phi + S \psi = 0.
\end{cases}
\end{equation*}
Applying $S^{-1}$ and integrating the second equation of the system we obtain
$$ \int_{\pd B} \psi = s_0 D \phi. $$
Substituting this back into the first equation, we have that the characteristic values of the system correspond to the characteristic values of the operator
$$H \deq A - s_0 \Gamma D.$$
Therefore the coefficient we are looking for will be given by
\begin{equation}
\label{proof:1stTermIntegralForm}
\tag{E}
\tr \oint_{\pd V} (\omega - \omega_\0)^l H^{-1}(\omega) H'(\omega) \, \d \omega,
\end{equation}
where $'$ denotes differentiation w.r.t. $\omega$.
A straightforward calculations shows that
\begin{align*}
H^{-1} = &~ (I - s_0 N D)^{-1} A^{-1} = \sum_{m=0}^{\infty} (s_0 ND)^m A^{-1},\\
H' = &~ A' - \Gamma' s_0 D - \Gamma (s_0 D)',
\end{align*}
then
\begin{align*}
H^{-1}H' = &~ A^{-1}A' + \sum_{m=1}^{\infty} (s_0 ND)^m A^{-1}A' - (s_0 ND)^{m-1} A^{-1} \Gamma' s_0 D - (s_0 ND)^{m-1}N (s_0 D)'.
\end{align*}
Since $A$ is analytic in $V$ and $A^{-1}$ has a simple pole at $\omega_\0$,
$$\oint_{\pd V} \$^l A^{-1}A' = 0.$$
Then 
\begin{align*}
\eqref{proof:1stTermIntegralForm} = &~ \sum_{m=1}^{\infty} \tr \oint_{\pd V} \$^l \left( (s_0 ND)^m A^{-1}A' - N^{m-1} (s_0 D)^m A^{-1} \Gamma' - N^m (s_0 D)^{m-1} (s_0 D)' \right)\\
= &~ \sum_{m=1}^{\infty} \oint_{\pd V} \$^l \left(s_0^m (DN)^{m-1} D A^{-1}A' A^{-1}\Gamma - s_0^m(DN)^{m-1} D A^{-1} \Gamma' - (s_0 DN)^{m-1} (s_0 D)'N \right).
\end{align*}
Since $(A^{-1})' = - A^{-1}A' A^{-1},$ by applying multiple times the chain rule we obtain
\begin{align*}
\eqref{proof:1stTermIntegralForm} =
&~ - \sum_{m=1}^{\infty} \dfrac{1}{m}   \oint_{\pd V} \$^l \left( (s_0 DN)^m \right)'.
\end{align*}
Then, by an integration by parts followed by a binomial expansion of $(DN)^m$, we have that
\begin{align}
\nonumber
\eqref{proof:1stTermIntegralForm} = &~  \sum_{m=1}^{\infty}\dfrac{l}{m}   \oint_{\pd V} \$^{l-1} (s_0 DN)^m \\
\label{proof:DNExpandedSum}
= &~  \sum_{m=1}^{\infty}\dfrac{l}{m}   \oint_{\pd V} \$^{l-1} s_0^m \Big( \dfrac{t_\omega}{\$} + r_\omega \Big)^m \\
\nonumber
= &~  \sum_{m=1}^{\infty}\dfrac{l}{m} \sum_{k=0}^{m} \binom{m}{k}   \oint_{\pd V} \dfrac{1}{\$^{k-l+1}} s_0^m t_\omega^k r_\omega^{m-k}.
\end{align}
Since the only pole in $V$ of the integrand is $\omega_\0$, by applying the residue theorem we can cancel each addend of the sum in $k$ except the one corresponding to a pole of order $1$, obtaining
\begin{align*}
\eqref{proof:1stTermIntegralForm} = &~  l t_{\omega_\0}^l r_{\omega_\0}^{-l} \sum_{m=l}^{\infty}\dfrac{1}{m} \binom{m}{l}  (s_0  r_{\omega_\0})^m .
\end{align*}
A final application of the identity
$$\sum_{m=l}^{\infty}\dfrac{1}{m} \binom{m}{l}  x^{m} = \dfrac{1}{l} \left(\dfrac{x}{1-x} \right)^{l},$$
leads to the formula in the thesis.

\end{proof}

\begin{remark}
\label{re:tildettilder}
The expansion for \eqref{eq:AsympExppl} still holds if we change both $t_{\omega_\0}, r_{\omega_\0}$ with respectively $\tilde t_{\omega_\0}, \tilde r_{\omega_\0}$ defined as
\begin{align*}
\tilde t_\omega \deq &~ \dfrac{U_\0(z)^2}{\omega+ \omega_\0},\\
\tilde r_\omega \deq &~ - U_\0(z) \cdot \int_\Omega \Gamma_\omega(z-y) U_\0(y)  \, \d y + \sum_{\substack{j=1 \\ j \neq \0}}^{\infty} \dfrac{U_j(z)  \cdot \DD_\Omega^{\omega}[U_j](z)}{\omega^2 - \omega_j^2}.
\end{align*}
This can be seen to hold true by regrouping the sum in \eqref{proof:DNExpandedSum} in the previous proof as
$$\DD_\Omega^\omega[N_\Omega^\omega(\cdot, z)](z) = \dfrac{\tilde t_\omega}{\omega - \omega_\0} + \tilde r_\omega.$$
\end{remark}

\subsection{First order term}
\begin{lemma}
\label{FirstOrderTerm}
The coefficient of the $\eps$ term in the expansion of $p_l$ is null.
\end{lemma}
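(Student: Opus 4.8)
The plan is to isolate the $\eps^1$ coefficient from the general expansion in Theorem~\ref{T:AsympExpPLGen}. Setting $n=1$, the only contribution comes from $j=1$ and the single term $k_1 = 1$, so the coefficient of $\eps$ in $p_l$ equals
\begin{equation*}
- l \, \tr \oint_{\pd V} (\omega - \omega_\0)^{l-1} \HH_0(\omega)^{-1} \HH_1(\omega) \, \d \omega .
\end{equation*}
First I would reuse the reduction performed in the proof of Lemma~\ref{ZeroOrderTerm}: after eliminating the $L^2(\pd B)$ component via $S^{-1}$, the $2\times 2$ system collapses onto the scalar-type operator $H = A - s_0 \Gamma D$ on $L^2(\pd\Omega)$, with $H^{-1} = \sum_{m\ge 0}(s_0 ND)^m A^{-1}$. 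The point is to carry the same Schur-complement bookkeeping but now with the off-diagonal perturbation $\HH_1$, whose blocks involve first-order Taylor data: the gradient terms $\sum_{|\a|=1}(\pd^\a\Gamma_\omega)(x,z)\int_{\pd B} y^\a\,\square(y)\,\d\sigma(y)$ and $\sum_{|\a|=1}(\pd^\a\DD_\Omega^\omega[\square])(z) x^\a$, together with $\XX_1 = 0$ since $1$ is odd.

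The key structural input is parity. By the remark following Definition~\ref{def:genCapacitytr}, $s_{\a,\b} = 0$ whenever $|\a|+|\b|$ is odd, because $\varphi$ is even/odd iff $\SS_B^0[\varphi]$ is even/odd on $\pd B$. Every term that survives the trace/residue computation will, after contracting the $\pd B$-integrals through $S^{-1}$, carry exactly one generalized-capacity factor $s_{\a,\b}$ with $|\a|+|\b| = 1$ (one index comes from the single factor of $\HH_1$, the other from whichever factor of $\HH_0$ it meets across the $\pd B$ integration), hence vanishes. So the strategy is: expand $\HH_0(\omega)^{-1}\HH_1(\omega)$ into a series in $s_0$ and $N, D$, exactly mirroring the manipulation leading to \eqref{proof:DNExpandedSum}, track where the new first-order $\pd B$-integral sits, observe that it is always paired against an even-order $\pd B$-integral from the $\HH_0$ factors (the order-zero $\int_{\pd B}\square$ or the $S$ in $\XX_0$), and conclude each term is proportional to some $s_{\a,\b}$ with $|\a|+|\b|$ odd, thus zero. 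Alternatively, one can invoke Remark~\ref{re:tildettilder} and the fact that $B$ is centered (first moments of $\pd B$ vanish): the first-order Taylor coefficients of $\Gamma_\omega(\cdot - z)$ and of $\DD_\Omega^\omega[\cdot](z)$ are integrated against $y^\a$ with $|\a|=1$ over the centered boundary $\pd B$, and the mismatch in parity forces cancellation.

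The main obstacle I anticipate is purely organizational rather than conceptual: one must be careful that the Schur reduction still goes through cleanly when the perturbation is present — i.e. that $\HH_1$'s lower-right block $\XX_1$ being zero genuinely removes the only place a first-order $\pd B$-integral could be self-paired, and that no hidden term escapes the parity count (for instance a term where the first-order factor in the numerator meets another first-order factor, which cannot occur at order $\eps^1$ since only one copy of $\HH_1$ appears). A secondary subtlety is handling the non-invertibility of $\SS_B^0$ on constants: as in Lemma~\ref{ZeroOrderTerm} this is absorbed into the definition of $s_0$ and the use of $S$ (which contains the $\ln\eps$), so the inversion $S^{-1}$ applied to $x^\a$ is well-defined and produces exactly the generalized capacities $s_{\a}$. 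Once the bookkeeping is set up as in the zero-order proof, the residue-theorem step that extracted the order-one pole carries over verbatim, and the conclusion is immediate: every surviving coefficient is a multiple of a generalized capacity of odd total degree, which is $0$.
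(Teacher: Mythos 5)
Your proposal is correct and follows essentially the same route as the paper: expand $\HH_0^{-1}\HH_1$ (the paper does this via the blockwise inversion formula rather than the Schur reduction, but the bookkeeping is the same), take the trace of the diagonal blocks, and observe that every surviving term carries a generalized capacity $s_{\a,\b}$ with $|\a|+|\b|=1$ odd, hence vanishes by the parity property from the remark after Definition~\ref{def:genCapacitytr}. The decisive step you identify --- that the single first-order $\pd B$-moment is always paired against an even-order one, producing an odd-degree $s_{\a,\b}$ --- is exactly the paper's argument.
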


\begin{proof}
With the notation introduced in \eqref{proof:1stTermRenames},
$$\HH_{1} \deq \sum_{|\a|=1} \begin{pmatrix}
0 &  (\pd^\a \Gamma) \int_{\pd B} y^\a \square  \\
(\pd^\a D) x^\a   & 0
\end{pmatrix}. $$
By applying the blockwise inversion formula 
$$ \begin{pmatrix}
W & X\\
Y & Z
\end{pmatrix}^{-1} = \begin{pmatrix}
(W-XZ^{-1}Y)^{-1} & -W^{-1} X(Z-YW^{-1}X)^{-1}\\
-Z^{-1}Y(W-XZ^{-1}Y)^{-1} & (Z-YW^{-1}X)^{-1}
\end{pmatrix}$$
to calculate $\HH_0^{-1}$, and rewriting the inverses of sums of operators in a Neumann series, we obtain
\begin{align*}
\HH_0^{-1} = &~
\begin{pmatrix}
A^{-1} & N \int_{\pd B}S^{-1}\\
-S^{-1}[1] DA^{-1} & -S^{-1}[1]\int_{\pd B}S^{-1} 
\end{pmatrix}\\
&~ + \sum_{m=0}^{\infty}
\begin{pmatrix}
(s_0 ND)^m A^{-1} & N (s_0 DN )^m \int_{\pd B} S^{-1}\\
-S^{-1}[1] (s_0 DN)^m DA^{-1} & -S^{-1}[1](s_0)^{m-1}(DN )^m \int_{\pd B} S^{-1}
\end{pmatrix} .
\end{align*}
A straightforward computation leads to
\begin{align*}
\HH_0^{-1} \HH_{1} = &~ \begin{pmatrix}
N s_\a (\pd^\a D)  & A^{-1} (\pd^\a \Gamma) \int_{\pd B} y^\a \square   \\
 -S^{-1}[1] s_\a  (\pd^\a D)  & -S^{-1}[1] DA^{-1} (\pd^\a \Gamma) \int_{\pd B} y^\a \square \, \d \sigma(y)
\end{pmatrix} \\
&~ +
\sum_{m=0}^{\infty} \sum_{|\a|=1}  \begin{pmatrix}
N  (s_0 DN)^m  s_\a (\pd^\a D) 
&
(s_0 ND)^m A^{-1} (\pd^\a \Gamma) \int_{\pd B} y^\a \square  \\
-S^{-1}[1]( DN)^m (s_0)^{m-1}s_\a (\pd^\a D) 
& 
-S^{-1}[1] (s_0 DN)^m DA^{-1} (\pd^\a \Gamma) \int_{\pd B} y^\a \square  \end{pmatrix}.
\end{align*}
Then, from the fact that $s_\a = 0$ for $|\a|=1$, we have that the coefficient of $\eps$ is
\begin{align*}
\tr \oint_{\pd V} \$^{l-1} \HH_0^{-1} \HH_{1} = s_\a (\dots)  = 0 .
\end{align*}
\end{proof}

\subsection{A proposal for an automated algorithm for higher order terms}
\label{sect:Higherorderproposal}

Let $a = DN$, $\phi = -S^{-1}[1]$.
We use $(m)^+$ to indicate the positive part of $m$, and the symbol $\sim$ to indicate that two operators have the same characteristic values.
Then we can rewrite
$$\HH_0^{-1} = \sum_{m=0}^{\infty}
\begin{pmatrix}
N s_0^m a^{(m-1)^+} D A^{-1} & N (s_0 a )^m\int_{\pd B} S^{-1}\\
\phi (s_0 a)^m DA^{-1} & \phi s_0 ^{(m-1)^+} a^m \int_{\pd B} S^{-1}
\end{pmatrix} ,
$$
$$\HH_{n} = \sum_{|\a|=n} \begin{pmatrix}
0 &  (-1)^{n+1} (\pd^\a \Gamma) \int_{\pd B} y^\a \square (y) \ \d \sigma(y) \\
(\pd^\a D) x^\a  & \XX_n
\end{pmatrix}.$$
An explicit computation leads to
\begin{multline*}
(-1)^{n+1} \HH_0^{-1} \HH_n  \\[6pt]
= \sum_{m=0}^{\infty} \sum_{|\a|=n}   \begin{pmatrix}
N (s_0 a )^m s_\a \pd^\a D
&
(-1)^{n+1} Ns_0 ^m a^{(m-1)^+} D  (\pd^\a N) \int_{\pd B} y^\a \square + N (s_0 a )^m\int_{\pd B} S^{-1} \XX_n \\
\phi s_0 ^{(m-1)^+} a^m \int_{\pd B} s_\a \pd^\a D
&
(-1)^{n+1} \phi (s_0 a)^m D (\pd^\a N) \int_{\pd B} y^\a \square +  \phi s_0 ^{(m-1)^+} a^m \int_{\pd B} S^{-1} \XX_n
\end{pmatrix}.
\end{multline*}
Since the upper and lower rows are respectively projections on the function $N$ and on the function $\phi$,
\begin{align*}
(-1&)^{n+1} \HH_0^{-1} \HH_n \\
& \sim \sum_{m=1}^{\infty} \sum_{|\a|=n}  \begin{pmatrix}
(s_0 a )^m s_\a \pd^\a DN 
&
(-1)^{n+1} s_0 ^m a^{(m-1)^+} D  (\pd^\a N) s_{0,\a} + (s_0 a )^m\int_{\pd B} S^{-1} \XX_n \phi \\
s_0 ^{(m-1)^+} a^m s_\a \pd^\a DN
&
(-1)^{n+1} (s_0 a)^m D (\pd^\a N) s_{0,\a}  +  s_0 ^{(m-1)^+} a^m \int_{\pd B} S^{-1} \XX_n \phi
\end{pmatrix}
\\
& = X \begin{pmatrix}
c_n 
&
(1 /a) d_n  +  q \\
(1/s_0  )c_n
&
d_n  +  (1/s_0 ) q
\end{pmatrix},
\end{align*}
where
$$X = \sum_{m=0}^\infty (s_0 a)^m = \dfrac{1}{1-s_0 a} =  \dfrac{\$ }{\$(1-s_0  r_\omega) -s_0 t_\omega}, $$ $$ c_n = \sum_{|\a|=n} s_\a \pd^\a DN, \quad d_n = (-1)^{n+1} \sum_{|\a|=n}  D \pd^\a N s_{0,\a}, \quad q = \int_{\pd B} S^{-1}\XX_n \phi.$$
Therefore
\begin{equation}
\label{eq:HighOrderTransMatrix}
(-1)^{n+1} \HH_0^{-1} \HH_{n_1} \dots \HH_0^{-1} \HH_{n_k} \sim X^k
\prod_{j=1}^k \begin{pmatrix}
c_{n_j} 
&
(1 /a) d_{n_j}  +  q \\
(1/s_0  )c_{n_j}
&
d_{n_j}  +  (1/s_0 ) q
\end{pmatrix}.
\end{equation}
Suppose that the coefficients of the matrix in \eqref{eq:HighOrderTransMatrix} can be rewritten explicitly in terms of sums of powers of the singularity $ \$ $ (this is a delicate part, as it is non trivial to explicit the singularity in $\pd^\a D N$ and $D \pd^\a N$ for general $\a$).
By selecting only the powers which sum up to $l-k$, we could then derive a constant $P$ s.t.
\begin{equation*}
\tr \oint_{\pd V} \$^{l-1} \HH_0^{-1} \HH_{n_1} \dots \HH_0^{-1} \HH_{n_k} = \dfrac{P}{( -s_0t_\omega)^k}.
\end{equation*}
Substituting this result back in the expression for $p_l$, we would thus have a method to compute any of the coefficients of the expansion in $\eps$.

\begin{remark}
\label{OnlyOddTerms}
If $k=|\a|$ is odd, we have $s_{\a,0}= s_{0,\a} = \XX_{|\a|} = 0 $, and thus the matrix in \eqref{eq:HighOrderTransMatrix} will have zeros on the diagonal and in the left lower corner.
Therefore for $k$ odd the coefficients of $\eps^k$ in the expansion of $p_l$ will always be zero.
\end{remark}

\section{Results for special cases}
\label{sect:Results}

We collect in this final section some interesting results which follow directly, or with minor algebraic manipulations, from Lemmas \ref{ZeroOrderTerm}, \ref{FirstOrderTerm} and Example \ref{ex:splittingp1p2}.

\paragraph{Simple eigenvalue}
Suppose $\omega_\0 $ is simple.
Then:
\begin{itemize}
\item we have
\begin{align*}
\omega_{\eps,1} - \omega_\0 = &~ \dfrac{t_{\omega_\0} }{ 1/s_0 - r_{\omega_\0}}  +  O(\eps^2),
\end{align*}
where, recalling Definition \ref{def:genCapacitytr},
\begin{align*}
1/s_0 =  - \dfrac{\ln(\eta_0 \omega_\0 \eps)}{2 \pi} - \ln \capac \pd B, \quad 
t_{\omega_\0} = &~ \dfrac{U_\0(z) \cdot \DD_\Omega^{\omega_\0}[U_\0](z)}{2 \omega_\0}, \quad
r_{\omega_\0} = \sum_{\substack{j=1 \\ j \neq \0}}^{\infty} \dfrac{U_j(z)  \cdot \DD_\Omega^{\omega_\0}[U_j](z)}{\omega_\0^2 - \omega_j^2}.
\end{align*}
\item
From Remark \ref{OnlyOddTerms} we know that there will be no terms $\eps^k$ with $k$ odd in the expansion.
\item
For $\eps$ small enough, we can deduce that $\omega_{\eps,1} \geq \omega_{\0} $.
\item
By considering an expansion in $1/ \ln(\eps)^n$, and substituting $\tilde t_\omega, \tilde r_\omega $ (as defined in Remark \ref{re:tildettilder}) to $t_\omega, r_\omega$, we obtain
$$\omega_{\eps,1} - \omega_\0 = - \dfrac{ 1}{\ln(\eps)} \dfrac{\pi U_\0(z)^2}{ \omega_\0 }  + O(\dfrac{1}{\ln(\eps)^{2}}),$$
which, once substituted $ 2 \omega_\0 \simeq \omega_{\epsilon,1} + \omega_{\0} $, is exactly \eqref{eq:oldomegaeps-omega0}.
\end{itemize}

\paragraph{Double eigenvalue}
If $\omega_\0 $ has double multiplicity then
\begin{align*}
\omega_{\eps,2} - \omega_{\0} = &~  \dfrac{t_{\omega_\0}}{1/s_0 - r_{\omega_\0}} + O(\eps^2),\\
\omega_{\eps,1} - \omega_{\0} = &~  O(\eps^2).
\end{align*}

We notice that if $z$ is on a nodal set of $U_\0$ (i.e. $u_{\0,1} \dots u_{\0,m_\0}$ are all zero at $z$) then $t_{\omega_\0} = 0$, and thus in both the cases of a simple or a double eigenvalue, the splitting order will be $O(\eps^2)$.

\paragraph{Disk domain and  disk inclusion}

Let $\Omega $ be the unit disk and let $\omega_\0^2$ be its first non-zero eigenvalue.
It is known that $\omega_\0^2$ is given by the first root of the derivative of the Bessel function $J_1$ and  has double multiplicity.
Suppose that also the rescaled inclusion $B$ is a unit disk.
First we compare results obtained through the multipole expansion method with the $\eps^2$ error theoretized by our formula for $\omega_{\eps,1} - \omega_{\0}$.

The multipole expansion is implemented by writing two polar coordinate systems, one centered in the center of $\Omega$ and one in $z$, and exploiting Graf's summation formula for Bessel function to rewrite the eigenvalue problem as a root finding problem for a complex valued function.

\begin{figure}[!htb]
\centering
\includegraphics[scale=.6]{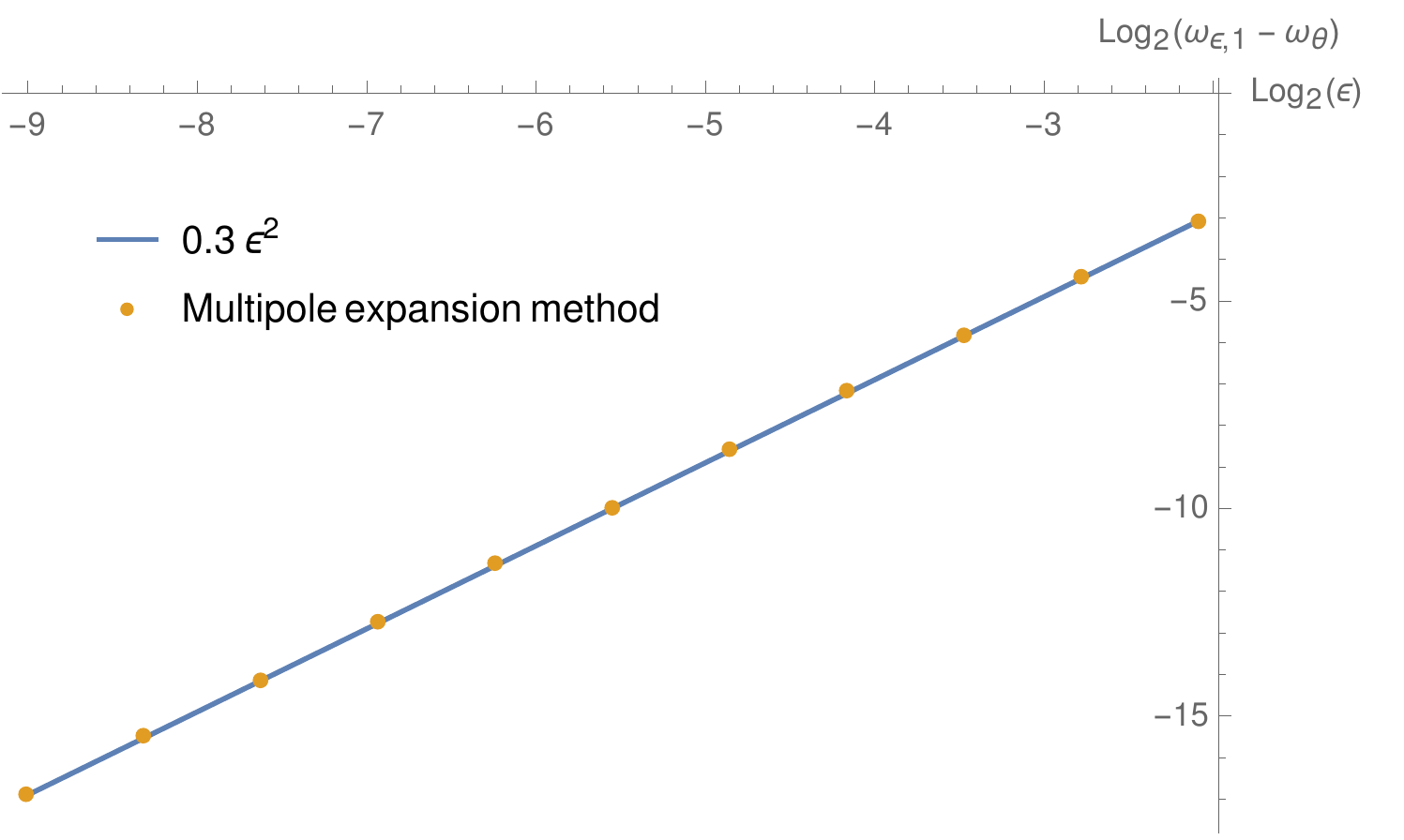}
\caption{The size of the inclusion varies, the center is fixed (at $z = (.5,0)$).}
\end{figure}

Now we compare our asymptotic formulae for $\omega_{\eps,2} - \omega_{\0}$ with results obtained with the multipole expansion method.
The asymptotic formula is implemented numerically by truncating at a finite value the series defining $r_{\omega_\0}$ in \eqref{eq:definet0r0}, and approximating the boundary layer integrals in $s_0, r_{\omega_\0}$ with adaptive quadrature methods.

\newpage

\begin{figure}[!htb]
\centering
\includegraphics[scale=.53]{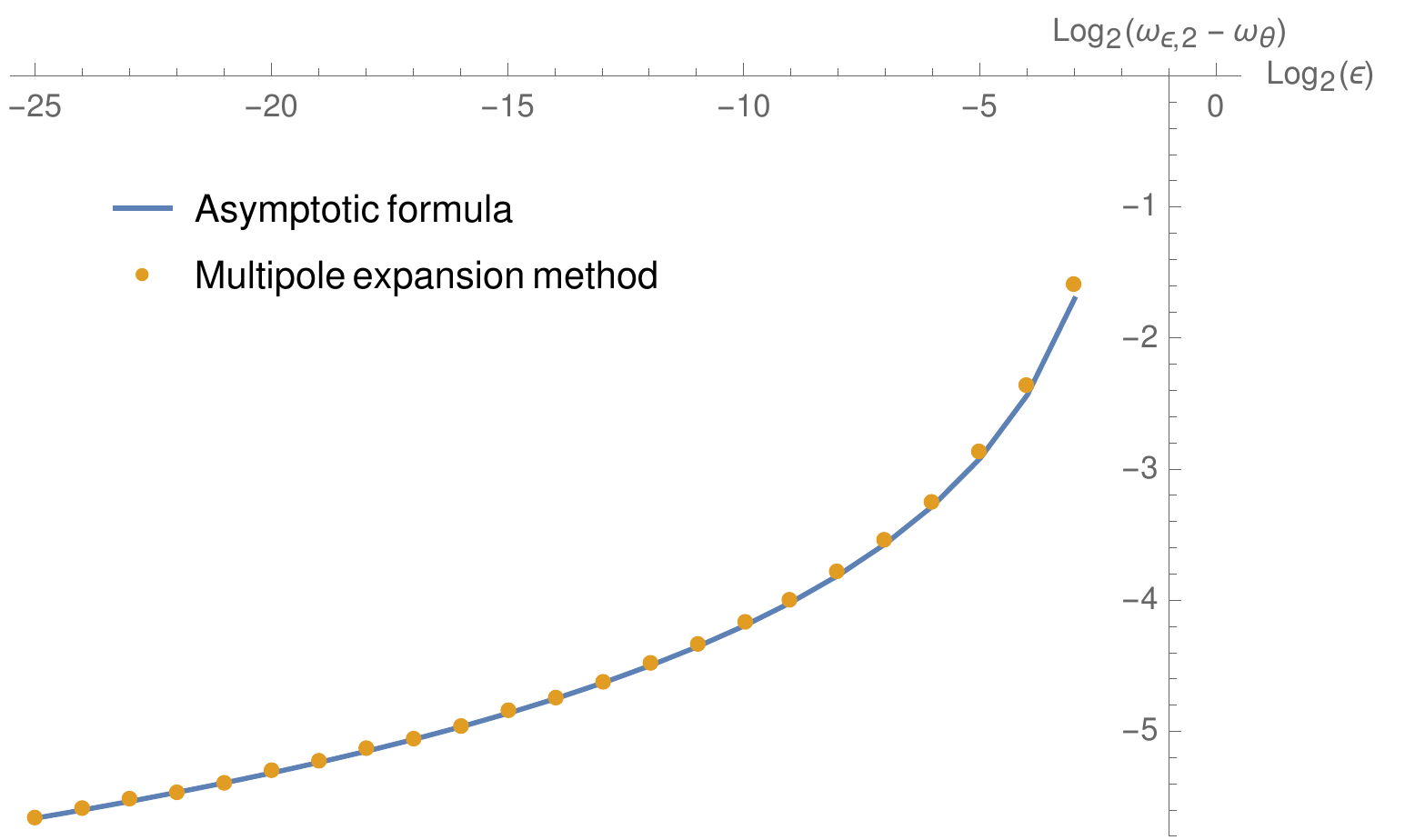}
\includegraphics[scale=.53]{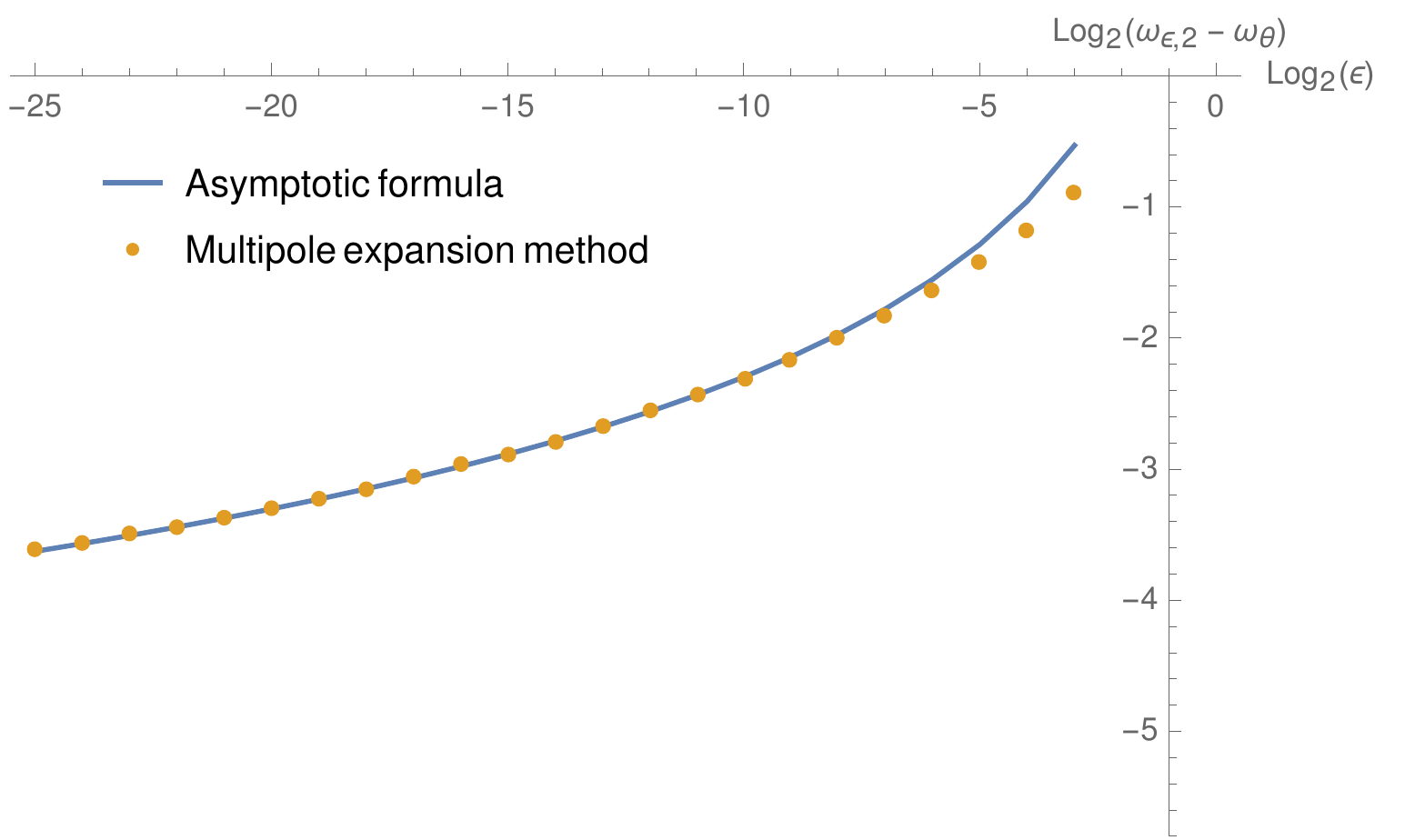}
\caption{The size of the inclusion varies, the center is fixed (left at $z = (.3,0)$, right at $z = (.8,0)$).}
\end{figure}
\begin{figure}[!htb]
\centering
\includegraphics[scale=.5]{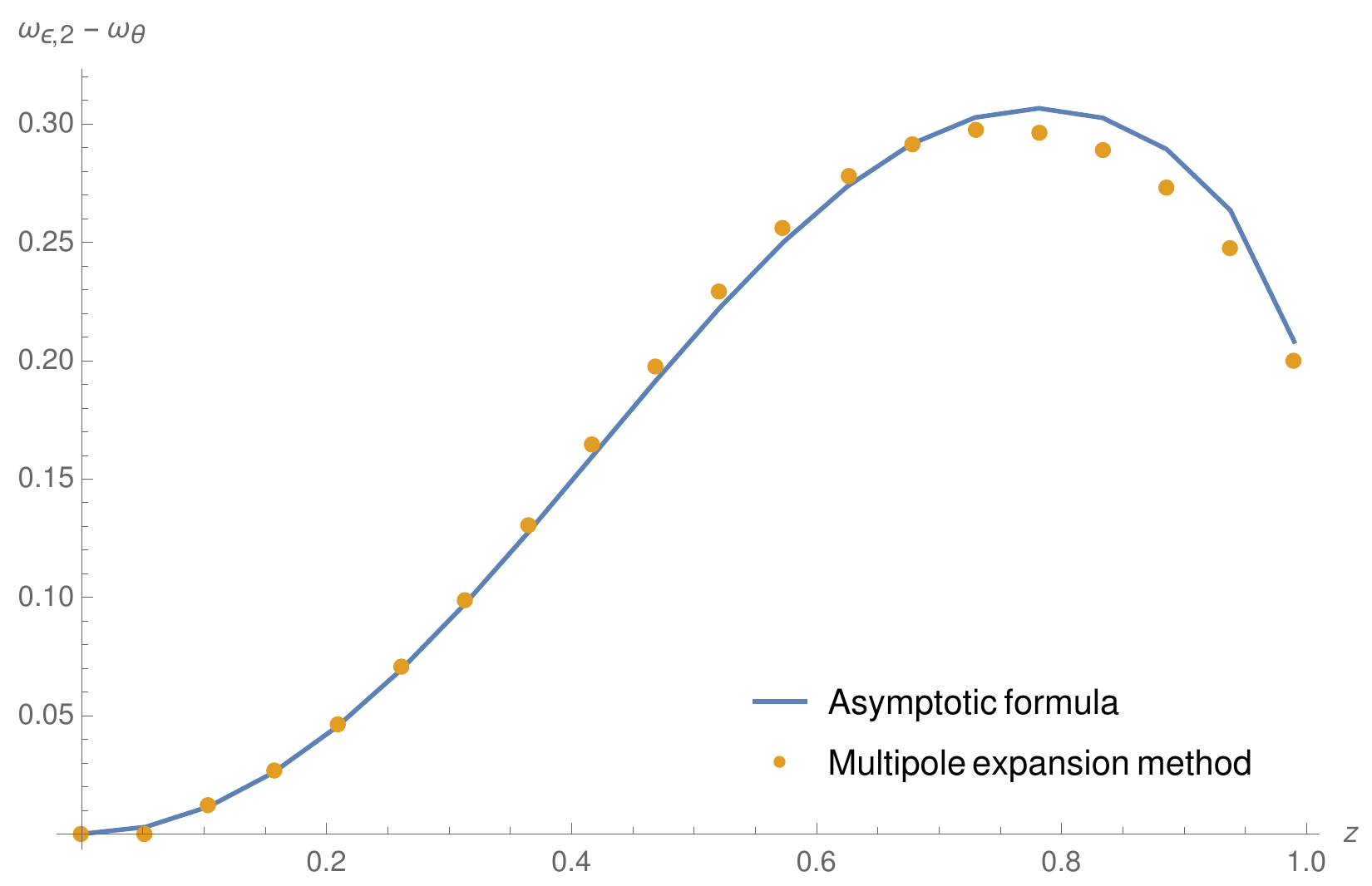}
\includegraphics[scale=.5]{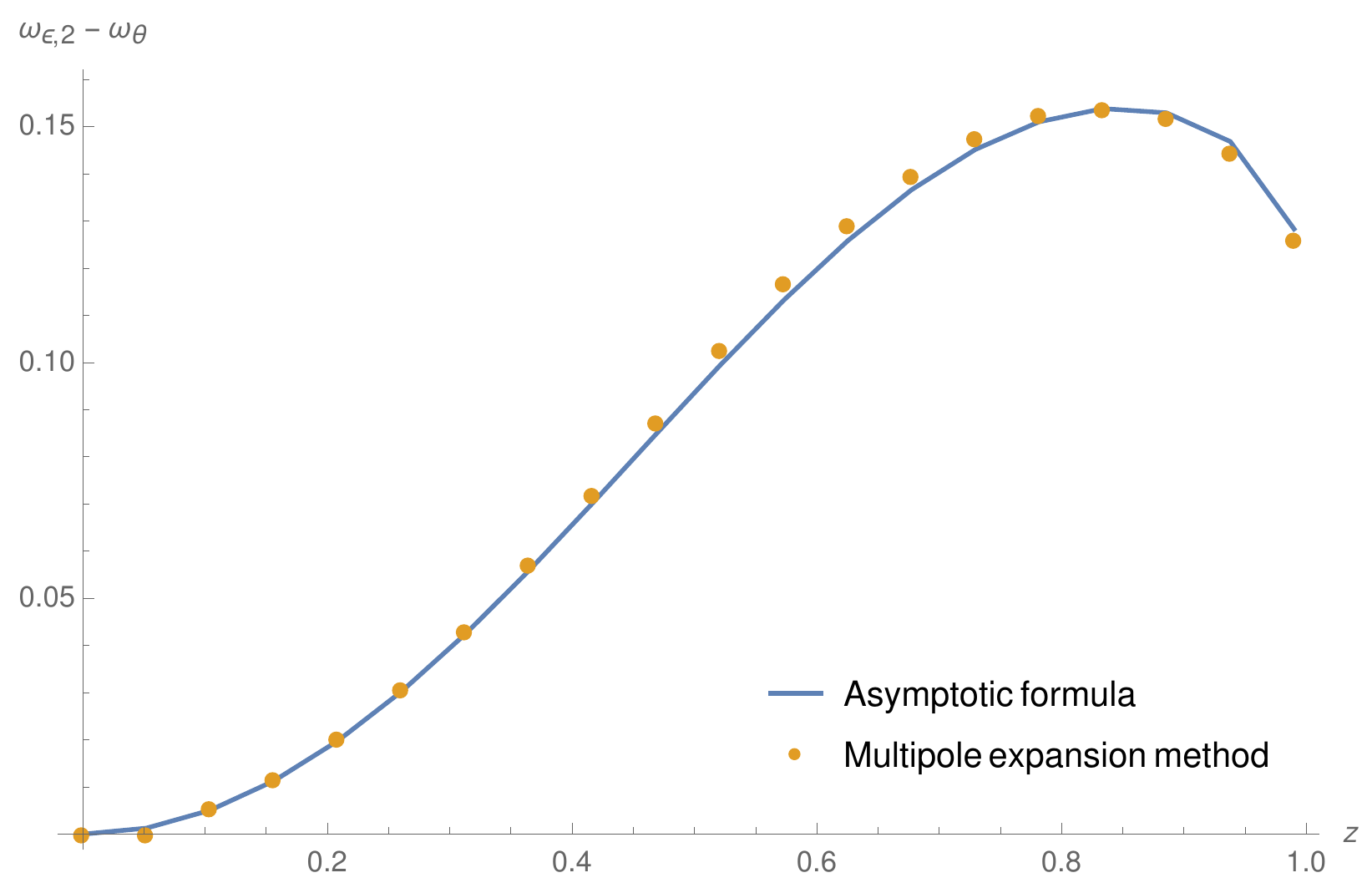}
\caption{The center of the inclusion varies, its size is fixed (left at $\eps = 10^{-2}$, right at $\eps = 10^{-4}$).}
\label{fig:digraph}
\end{figure}

We remark that the good resolution in $\eps$ and $z$ opens the possibility of inclusion reconstruction algorithms from the asymptotic formulae, which will be the topic of an upcoming paper.

\bibliography{ref}
\bibliographystyle{plain}
\end{document}